\begin{document}

\newtheorem{theorem}{Theorem}
\newtheorem{lemma}[theorem]{Lemma}
\newtheorem{algol}{Algorithm}
\newtheorem{cor}[theorem]{Corollary}
\newtheorem{prop}[theorem]{Proposition}

\newcommand{\comm}[1]{\marginpar{%
\vskip-\baselineskip 
\raggedright\footnotesize
\itshape\hrule\smallskip#1\par\smallskip\hrule}}

\def\cA{{\mathcal A}}
\def\cB{{\mathcal B}}
\def\cC{{\mathcal C}}
\def\cD{{\mathcal D}}
\def\cE{{\mathcal E}}
\def\cF{{\mathcal F}}
\def\cG{{\mathcal G}}
\def\cH{{\mathcal H}}
\def\cI{{\mathcal I}}
\def\cJ{{\mathcal J}}
\def\cK{{\mathcal K}}
\def\cL{{\mathcal L}}
\def\cM{{\mathcal M}}
\def\cN{{\mathcal N}}
\def\cO{{\mathcal O}}
\def\cP{{\mathcal P}}
\def\cQ{{\mathcal Q}}
\def\cR{{\mathcal R}}
\def\cS{{\mathcal S}}
\def\cT{{\mathcal T}}
\def\cU{{\mathcal U}}
\def\cV{{\mathcal V}}
\def\cW{{\mathcal W}}
\def\cX{{\mathcal X}}
\def\cY{{\mathcal Y}}
\def\cZ{{\mathcal Z}}

\def\C{\mathbb{C}}
\def\F{\mathbb{F}}
\def\K{\mathbb{K}}
\def\Z{\mathbb{Z}}
\def\R{\mathbb{R}}
\def\Q{\mathbb{Q}}
\def\N{\mathbb{N}}
\def\M{\textsf{M}}

\def\({\left(}
\def\){\right)}
\def\[{\left[}
\def\]{\right]}
\def\<{\langle}
\def\>{\rangle}

\def\e{e}

\def\eq{\e_q}
\def\fS{{\mathfrak S}}

\def\lcm{{\mathrm{lcm}}\,}

\def\fl#1{\left\lfloor#1\right\rfloor}
\def\rf#1{\left\lceil#1\right\rceil}
\def\mand{\qquad\mbox{and}\qquad}

\def\jt{\tilde\jmath}
\def\ellmax{\ell_{\rm max}}
\def\llog{\log\log}


\title{On the Distribution of\\Atkin and Elkies Primes}

\author{
{\sc Igor E.~Shparlinski} \\
{Department of Computing}\\
{Macquarie University} \\
{Sydney, NSW 2109, Australia} \\
{\tt igor.shparlinski@mq.edu.au}
\and 
{\sc Andrew V. Sutherland} \\
{Department of Mathematics}\\
{Massachusetts Institute of Technology} \\
{Cambridge, Massachusetts 02139, USA} \\
{\tt drew@math.mit.edu}}
\date{December 19, 2011}
\maketitle

\begin{abstract} Given an elliptic curve $E$
over a finite field $\F_q$ of $q$ elements, we say that an odd prime 
$\ell \nmid q$ is an Elkies prime for $E$ if $t_E^2 - 4q$ is a square
modulo~$\ell$, where $t_E = q+1 - \#E(\F_q)$  and $\#E(\F_q)$ is the number of $\F_q$-rational
points on $E$; otherwise $\ell$ is called an Atkin prime.
We show that there are 
asymptotically the same number of 
Atkin and Elkies primes $\ell < L$ on average over all curves $E$ over $\F_q$,
provided that $L \ge  (\log q)^\varepsilon$ for any fixed $\varepsilon>0$
and a sufficiently large $q$. 
We use this result to design and analyse a fast algorithm to 
generate random elliptic curves with $\#E(\F_p)$ prime, where $p$ varies uniformly over primes in a given interval $[x,2x]$.

\end{abstract}


\section{Introduction}  

Let $\F_q$ be a finite field of $q$ elements.
For an elliptic curve $E$ over $\F_q$ we denote by $\#E(\F_q)$  the number of
$\F_q$-rational points on $E$ and define the {\it trace of Frobenius\/}
$t_E = q+1 - \#E(\F_q)$; see~\cite{ACDFLNV,Silv} for a background on 
elliptic curves.
We say that an odd prime 
$\ell \nmid q$ is an  {\it Elkies prime\/} for $E$ if $t_E^2 - 4q$ is a quadratic residue
modulo $\ell$; otherwise $\ell\nmid q$ is called an {\it  Atkin prime\/}. 
For any elliptic curve over a finite field, one expects about the same number of  
Atkin and Elkies primes $\ell < L$ as $L\to \infty$.

These primes play a key role in the {\it Schoof-Elkies-Atkin (SEA) algorithm\/}, see~\cite[\S17.2.2 and \S17.2.5]{ACDFLNV}, and their distribution affects the performance of this
algorithm in a rather dramatic way. Thus we define
$N_a(E;L)$ and $N_e(E;L)$ as the number of Atkin and Elkies primes $\ell$ in the dyadic
interval $\ell \in [L, 2L]$ for 
an elliptic curve $E$ over $\F_q$, respectively. 
We clearly have
$$
N_a(E;L) + N_e(E;L) =  \pi(2L)-\pi(L) + O\(1\),
$$
where $\pi(L)$ denotes the number of primes $\ell < L$, and one expects that
\begin{equation}
\label{eq:NaNe}
N_a(E;L) \sim N_e(E;L)  \sim   \frac12 \(\pi(2L)-\pi(L)\),
\end{equation}
as $L\to \infty$. 

Under the Generalised Riemann Hypothesis (GRH), using the bound 
of quadratic characters over primes, 
it has been noted by Galbraith and Satoh
that~\eqref{eq:NaNe}
holds for $L \ge (\log q)^{2 + \varepsilon}$ for any fixed $\varepsilon>0$
and $q \to \infty$; see~\cite[App.~A]{Sat}, and also~\cite[Prop.~5.25]{IwKow}
or~\cite[Ex.~5.a in \S13.1]{MonVau}. 
However, the unconditional results are much weaker 
and essentially rely on our knowledge of the distribution of 
primes in arithmetic progressions; see~\cite[\S5.9]{IwKow}
or~\cite[Ch.~4 and 11]{MonVau}. 

Here, we study the values of $N_a(E;L)$ and $N_e(E;L)$ on average over 
all elliptic curves $E$ over $\F_q$.  Let $\cE_q$ be any set of  representative of 
all isomorphism classes elliptic curves over $\F_q$.

\begin{theorem} 
\label{thm:AvElkies} For any integer $\nu \ge 1$, we have
\begin{equation*}
\begin{split}
 \frac{1}{\# \cE_q}  \sum_{E \in \cE_q}& \left|N_*(E;L) - \frac12 \(\pi(2L)-\pi(L)\)\right|^{2\nu}\\
 & =O\(  \pi(2L)^{\nu} \log q (\log \log q)^2   + 
  \pi(2L)^{2\nu}    q^{-1/2}  L^{\nu} \log L\),
  \end{split}
\end{equation*}
 where $N_*(E;L)$ is either $N_a(E;L)$ or $N_e(E;L)$. 
\end{theorem}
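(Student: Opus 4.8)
The plan is to reduce the theorem to a moment estimate for an incomplete character sum and then to average over curves using the class number parametrisation of isogeny classes. Since an odd prime $\ell$ is Elkies exactly when the Kronecker symbol $\left(\frac{t_E^2-4q}{\ell}\right)$ is nonnegative, one has
\begin{equation*}
N_e(E;L)-N_a(E;L)=\sum_{L\le \ell\le 2L}\(\frac{t_E^2-4q}{\ell}\)+r(E),
\end{equation*}
where $r(E)$ counts the primes $\ell\in[L,2L]$ dividing $t_E^2-4q$ and the symbols are taken to vanish at such $\ell$. Because $|t_E^2-4q|\le 4q$, we have $r(E)=O(\log q/\log L)$, and the first step is to verify, via the triangle inequality and Holder's inequality, that the contribution of $r(E)$ to the $2\nu$-th moment is negligible (using that $\sum_E r(E)$ is small on average). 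Combined with the identity $N_*(E;L)-\frac12(\pi(2L)-\pi(L))=\pm\frac12(N_e(E;L)-N_a(E;L))+O(1)$, this reduces the theorem to bounding
\begin{equation*}
\frac{1}{\#\cE_q}\sum_{E\in\cE_q}\left|\sum_{L\le\ell\le 2L}\(\frac{t_E^2-4q}{\ell}\)\right|^{2\nu}.
\end{equation*}

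Next I would expand the $2\nu$-th power and use the complete multiplicativity of the Kronecker symbol in its lower argument to rewrite the power of the inner sum as $\sum_m a_m\(\frac{t_E^2-4q}{m}\)$, where $m$ runs over products of $2\nu$ (not necessarily distinct) primes from $[L,2L]$ and $a_m$ counts the ordered tuples producing $m$. Interchanging the summations, the problem becomes one of evaluating the averaged twisted sums $\frac{1}{\#\cE_q}\sum_{E}\(\frac{t_E^2-4q}{m}\)$ for each such $m$. Here I invoke Deuring's theorem and its refinements: the number of $E\in\cE_q$ with $t_E=t$ equals, up to the bounded automorphism weights and a negligible supersingular and boundary contribution, the Hurwitz class number $H(4q-t^2)$. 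This converts each averaged sum into
\begin{equation*}
\frac{1}{\#\cE_q}\sum_{|t|\le 2\sqrt q}H(4q-t^2)\(\frac{t^2-4q}{m}\).
\end{equation*}

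I would then split the $m$ according to whether $m$ is a perfect square. For square $m$ the Kronecker symbol equals $1$ when $\gcd(t^2-4q,m)=1$ and vanishes otherwise; these diagonal terms produce the main contribution. Their total count is $\sum_{m\ \mathrm{square}}a_m=O_\nu((\pi(2L)-\pi(L))^\nu)=O_\nu(\pi(2L)^\nu)$, and evaluating the associated class-number-weighted densities through the analytic class number formula — using $L(1,\chi)\ll\log q$ together with a bound of size $(\llog q)^2$ for the local factors at the small primes — accounts for the first term $\pi(2L)^\nu\log q(\llog q)^2$. For non-square $m$ the map $t\mapsto\(\frac{t^2-4q}{m}\)$ is a nontrivial even character modulo the squarefree kernel of $m$, and here one must exhibit genuine cancellation in $\sum_{t}H(4q-t^2)\(\frac{t^2-4q}{m}\)$.

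The off-diagonal estimate is the crux. The natural route is to substitute the Dirichlet class number formula for $H(4q-t^2)$, so that $H$ is itself represented by a character sum, interchange the order of summation, and reduce to a complete multiplicative character sum in $t$ modulo $m$ (possibly twisted by an additive character arising from completing the sum). Weil's bound then yields square-root cancellation of size $O(m^{1/2+o(1)})$; after completing the incomplete sum over $t$ and normalising by the total mass $\#\cE_q\asymp q$, each non-square $m$ contributes $O(m^{1/2}q^{-1/2}\log L)\le O(L^\nu q^{-1/2}\log L)$, and summing over the at most $\pi(2L)^{2\nu}$ tuples produces the second term $\pi(2L)^{2\nu}q^{-1/2}L^\nu\log L$. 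The main obstacle is exactly this uniform cancellation: the weight $H(4q-t^2)$ is neither smooth nor multiplicative in $t$, so substituting the class number formula and then tracking conductors, Gauss sums, and the truncation error in the resulting Weil estimate must be done with explicit uniformity in $m$. Balancing the truncation loss in the class number formula against the gain from Weil's bound is what pins down the precise shape of the two error terms in the statement.
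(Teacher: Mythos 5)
Your proposal follows the paper's proof up to the point of expanding the $2\nu$-th moment into a sum over tuples $(\ell_1,\dots,\ell_{2\nu})$ and classifying them according to whether $m=\ell_1\cdots\ell_{2\nu}$ is a square, but it then diverges at the decisive averaging step, and that is where the gap lies. The paper never attempts to exploit cancellation against the class-number weight: it inserts Lenstra's pointwise upper bound $f_q(t)\ll q^{1/2}\log q(\log\log q)^2$ (Lemma~\ref{lem:fpt}) to replace the weighted average over isomorphism classes by the \emph{unweighted} sum $\sum_{|t|<2q^{1/2}}\left(\frac{t^2-4q}{m}\right)$, paying exactly the factor $\log q(\log\log q)^2$ that appears in the theorem; the remaining sum over $t$ is then a clean incomplete character sum with polynomial argument, handled by Weil plus completion (Lemma~\ref{lem:SumIcompl}), contributing $q^{1/2}/m+C^{s}m^{1/2}\log m$ per tuple. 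You instead propose to retain the weight $H(4q-t^2)$ and to prove square-root cancellation in $\sum_{|t|\le 2\sqrt q}H(4q-t^2)\left(\frac{t^2-4q}{m}\right)$ for non-square $m$ by substituting Dirichlet's class number formula and applying Weil. You correctly flag this as ``the crux,'' but you supply no argument for it, and the sketched route does not go through: after writing $H(4q-t^2)$ via $L(1,\chi_{t^2-4q})=\sum_n\left(\frac{t^2-4q}{n}\right)n^{-1}$ one faces a bilinear sum $\sum_n n^{-1}\sum_t\sqrt{4q-t^2}\left(\frac{t^2-4q}{nm}\right)$ in which the archimedean weight and, more seriously, the long range of the auxiliary modulus $n$ needed before truncation is admissible prevent a direct application of the Weil bound uniformly in $m$; this is an open-ended project, not a routine completion argument. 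Note also that the per-tuple bound $O(m^{1/2}q^{-1/2}\log L)$ you assert is not what such cancellation would produce (dividing $O(m^{1/2}\log m)$ by $\#\cE_q\asymp q$ gives $m^{1/2}q^{-1}\log m$); the $q^{-1/2}$ in the theorem actually originates from the deliberately lossy step $f_q(t)\ll q^{1/2}\log q(\log\log q)^2$, which your scheme has discarded.

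A secondary gap concerns the term $r(E)=\omega_L(t_E^2-4q)$, which you dismiss using the pointwise bound $r(E)=O(\log q/\log L)$ and an appeal to $\sum_E r(E)$ being small on average. For the $2\nu$-th moment this is insufficient: when $L=(\log q)^{\varepsilon}$ the pointwise bound gives $(\log q/\log L)^{2\nu}$, which exceeds the claimed term $\pi(2L)^{\nu}\log q(\log\log q)^2$ for small $\varepsilon$. The paper needs the genuine moment estimate $\sum_{|t|<T}\omega_L^{2\nu}(t^2-a)\ll T/\log L+L^{2\nu}/(\log L)^{2\nu}$ of Lemma~\ref{lem:PrimeDiv}, again combined with the $f_q(t)$ bound, to control this contribution.
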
 

For an appropriate choice of $\nu$ we obtain from Theorem~\ref{thm:AvElkies} 
a nontrivial result in the range
$$
(\log q)^{\varepsilon} 
\le L \le q^{1/2} (\log q)^{-1/2-\varepsilon},
$$
for any fixed $\varepsilon> 0$ and all sufficiently 
large $q$. This range includes values of~$L$ that are much smaller than those addressed by
the result of Galbraith and Satoh for any particular elliptic curve,
even under the GRH.

In many applications it is more convenient to consider curves given 
by the family of short  Weierstra\ss\  equations
\begin{equation}
\label{eq:Eab}
E_{a,b}\colon\quad Y^2=X^3+aX+b,
\end{equation}
where $a$ and $b$ run through $\F_q$, with $\gcd(q,6)=1$, and satisfy $4a^3+27b^2\ne 0$. 
Since there are $O(p)$ pairs $(a,b)\in \F_p^2$ for which $E_{a,b}$ lies in a 
given isomorphism class, we easily derive from Theorem~\ref{thm:AvElkies} the following corollary. 

\begin{cor} 
\label{cor:AvElkies-ab} For any real $\varepsilon > 0$ and integer $C \ge 1$, 
for a sufficiently large prime $p$ and $L \ge (\log p)^{\varepsilon}$
there are at most $p^2(\log p)^{-C}$ pairs
$(a,b)\in \F_p^2$ for which  $4a^3+27b^2\ne 0$ and
$$
N_*(E;L) < \frac13 (\pi(2L)-\pi(L)) ,
$$
 where $N_*(E;L)$ is either $N_a(E;L)$ or $N_e(E;L)$. 
\end{cor}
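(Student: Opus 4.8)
The plan is to deduce the corollary from Theorem~\ref{thm:AvElkies} (taken with $q=p$) by a moment, or large-deviation, argument, after translating the count of pairs into a count of isomorphism classes. Throughout write $P=\pi(2L)-\pi(L)$. For $p>3$ a pair $(a,b)\in\F_p^2$ with $4a^3+27b^2\ne0$ satisfies $N_*(E_{a,b};L)<\tfrac13 P$ if and only if the isomorphism class of $E_{a,b}$ does, so the pairs we must count are exactly those lying in such deficient isomorphism classes. Since each class over $\F_p$ is represented by $O(p)$ pairs $(a,b)$, it suffices to bound the number of deficient classes by $O(p(\log p)^{-C})$; multiplying by the $O(p)$ representatives then gives the claimed $p^2(\log p)^{-C}$.

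First I would note that $N_*(E;L)<\tfrac13 P$ forces $|N_*(E;L)-\tfrac12 P|>\tfrac16 P$, so each deficient class contributes at least $(\tfrac16 P)^{2\nu}$ to the moment on the left of Theorem~\ref{thm:AvElkies}. Combining Markov's inequality with that theorem and the bound $\#\cE_p=O(p)$ gives
\begin{equation*}
\#\{E\in\cE_p:\ N_*(E;L)<\tfrac13 P\}
\le\(\tfrac16 P\)^{-2\nu}\,\#\cE_p\cdot O\!\(\pi(2L)^{\nu}\log p\,(\llog p)^2+\pi(2L)^{2\nu}p^{-1/2}L^{\nu}\log L\).
\end{equation*}

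Next I would simplify by the prime number theorem, which gives $\pi(2L)\asymp P\asymp L/\log L$. The right-hand side above then splits into a contribution $O(p\,P^{-\nu}\log p\,(\llog p)^2)$ from the first term and $O(p^{1/2}L^{\nu}\log L)$ from the second. For the first term the hypothesis $L\ge(\log p)^{\varepsilon}$ yields $P\gg(\log p)^{\varepsilon}/\llog p$, so fixing an integer $\nu$ depending only on $C$ and $\varepsilon$ with $\nu\varepsilon>C+1$ (for instance $\nu=\fl{(C+1)/\varepsilon}+1$) makes $P^{-\nu}\le(\log p)^{-C-1}(\llog p)^{-2}$ for all large $p$; hence this contribution is $O(p(\log p)^{-C})$, as required.

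The delicate point, and the main obstacle, is the second term $O(p^{1/2}L^{\nu}\log L)$: to keep it below $O(p(\log p)^{-C})$ one needs $L^{\nu}\log L\ll p^{1/2}(\log p)^{-C}$, i.e.\ $L$ must not exceed about $p^{1/(2\nu)}$. With $\nu$ now pinned down by $C$ and $\varepsilon$ this is a genuine upper restriction, so the two error terms pull in opposite directions and the argument delivers the stated bound precisely while $L$ stays in the effective range of Theorem~\ref{thm:AvElkies}; verifying that a single $\nu$ (or, across the whole range, a suitable $\nu$ for each $L$) controls both terms simultaneously is the only substantive check. Everything else is routine, given the uniform $O(p)$ count of representatives per isomorphism class.
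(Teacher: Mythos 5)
Your proposal is correct and is essentially the argument the paper intends: the paper derives the corollary in a single sentence from Theorem~\ref{thm:AvElkies} via the $O(p)$ pairs $(a,b)$ per isomorphism class, and your Markov/moment computation with $\nu$ chosen so that $\nu\varepsilon>C+1$ is exactly the missing detail. Your caveat about the second error term is also well taken: with $\nu$ fixed in terms of $C$ and $\varepsilon$, the contribution $p^{1/2}L^{\nu}\log L$ stays below $p(\log p)^{-C}$ only for $L$ up to roughly $p^{1/(2\nu)}$, and no choice of $\nu\ge 1$ covers $L\gg p^{1/2}$; this matches the effective range $(\log q)^{\varepsilon}\le L\le q^{1/2}(\log q)^{-1/2-\varepsilon}$ that the authors themselves state after Theorem~\ref{thm:AvElkies}, so the corollary as literally written (with no upper bound on $L$) is slightly stronger than what the theorem delivers. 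Since the later application only needs $L\ll(\log p)^{3}$, where your argument goes through without difficulty, this is a defect of the statement rather than of your proof.
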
 

As an application of Corollary~\ref{cor:AvElkies-ab}, in Section~\ref{sec:alg} we present Algorithm~\ref{alg:Prime card},
which efficiently generates a random elliptic curve of prime order.
Given an integer~$x>3$, we seek a uniformly random element of the set $T(x)$ of all triples $(p,a,b)$, where $p$ is a prime in the interval $[x,2x]$, while~$a$ and~$b$ are elements of $\F_p$ for which the elliptic curve 
$E_{a,b}$ in~\eqref{eq:Eab}
has a prime number of $\F_p$-rational points. 
This problem arises in cryptographic applications of elliptic curves, where one typically requires a curve with prime (or near prime) order, but wishes to choose a curve that is otherwise as generic as possible. 

We show that the output and complexity of Algorithm~\ref{alg:Prime card} (see Section~\ref{sec:alg}) 
satisfy the following:

\begin{theorem}\label{thm:PrimeCard}
Given a real number $x> 3$, Algorithm~\ref{alg:Prime card} outputs a prime $p\in[x,2x]$, two elements $a,b\in\F_p$, and $N=\#E_{a,b}(\F_p)$,
where $N$ is prime and $(p,a,b)$ is uniformly distributed over $T(x)$.
Assuming the GRH, the expected running time of Algorithm~\ref{alg:Prime card} is $O((\log x)^5(\log\log x)^3\log\log\log x)$.
\end{theorem}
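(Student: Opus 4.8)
The plan is to analyse Algorithm~\ref{alg:Prime card} as a Las~Vegas rejection sampler, separating correctness (uniformity of the output) from the running-time estimate. I would present the sampler as repeatedly drawing a candidate $(p,a,b)$ from a fixed region that does not depend on the outcome --- say $p$ uniform among the integers of $[x,2x]$ and $a,b$ uniform in $\{0,\dots,\lceil 2x\rceil-1\}$ --- and accepting it exactly when $a,b<p$, $p$ is prime, $4a^3+27b^2\neq0$, and $N=\#E_{a,b}(\F_p)$ is prime, returning the first accepted triple together with $N$. Because $T(x)$ is precisely the set of accepted triples lying in this region, and the restriction of a uniform distribution to a subset is again uniform, the output is uniformly distributed over $T(x)$; this settles the distributional claim. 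It is worth recording that the singular locus $4a^3+27b^2=0$ contains exactly $p$ pairs over $\F_p$, so admissibility removes a negligible fraction of the $(a,b)$ and does not perturb the counting below.

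The first quantitative step is to bound the expected number of trials by the ratio of the size of the sampling region, which is $\asymp x^3$, to $|T(x)|$. Thus I need a lower bound $|T(x)|\gg x^3/(\log x)^2$, i.e.\ that a proportion $\asymp1/\log p$ of the $\approx p^2$ curves $E_{a,b}/\F_p$ have prime order, summed over the $\asymp x/\log x$ primes $p\in[x,2x]$. This is where the GRH enters first: I would invoke the GRH-conditional average (over $p$) form of the Koblitz-type count of prime-order curves to supply this bound. Granting it, the expected number of trials is $O((\log x)^2)$, and since each trial has $p$ prime with probability $\asymp1/\log x$, Wald's identity shows that point counting is performed on only $O(\log x)$ trials in expectation; the $O((\log x)^2)$ cheap primality pre-tests on the remaining candidates cost a negligible $O((\log x)^3)$ field operations in total.

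Next I would bound the cost of a single point-counting run, and this is where Corollary~\ref{cor:AvElkies-ab} is decisive. Choosing $L$ to be a suitable constant multiple of $\log p$ (so that $L\ge(\log p)^{\varepsilon}$ holds, e.g.\ with $\varepsilon=1$), all but at most $p^2(\log p)^{-C}$ of the pairs $(a,b)$ yield a \emph{good} curve, one with at least $\tfrac13(\pi(2L)-\pi(L))$ Elkies primes $\ell\in[L,2L]$. For a good curve the product of these Elkies primes already exceeds $4\sqrt p$, so the trace $t_{E}$ --- and hence $N$ --- is determined by the Chinese Remainder Theorem using Elkies primes alone, avoiding the costly Atkin case entirely. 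Each such prime $\ell=O(\log p)$ is handled by one modular exponentiation $X^{p}\bmod\Phi_{\ell}(X,j)$ and an eigenvalue computation, and with $O(\log p/\log\log p)$ primes the whole determination takes $\tilde O((\log p)^{4})$ bit operations. Multiplying by the $O(\log x)$ point-counting trials gives $\tilde O((\log x)^5)$; tracking the fast-arithmetic factors honestly --- the integer multiplication cost $\M(\log p)$ per field operation and the FFT-based polynomial arithmetic modulo $\Phi_\ell$ --- turns the $\tilde O$ into the stated $(\log\log x)^3\log\log\log x$. The final $p$ and $N$ are certified prime by the deterministic Miller test, which is polynomial under the GRH.

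The main obstacle is the tension between this speed-up and exact uniformity. The efficiency comes from restricting to Elkies primes, but one is \emph{not} free to discard the rare \emph{bad} curves that have too few Elkies primes below $L$, since throwing them out would bias the output away from the uniform distribution on $T(x)$. The resolution is to compute $\#E_{a,b}(\F_p)$ correctly on every curve, falling back to a slow but still polynomial method --- full SEA including Atkin primes, or plain Schoof --- on a bad curve, and then to bound the bad curves' contribution to the \emph{expected} time rather than to the worst case. This is exactly what Corollary~\ref{cor:AvElkies-ab} delivers, and the reason it is stated for an arbitrary constant $C$: I would take $C$ larger than the exponent of the fallback cost, so that the bad curves, of density at most $(\log p)^{-C}$ among the $O(\log x)$ point-counting trials, contribute only a lower-order term. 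Checking that the fallback is genuinely polynomial, and that neither the early-abort speed-up (rejecting as soon as a small --- necessarily Elkies --- prime divides $\#E_{a,b}$) nor the $a,b<p$ rejection disturbs the uniformity argument, is the part that demands the most care.
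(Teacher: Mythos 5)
Your overall architecture (rejection sampling for uniformity, a lower bound on the density of accepted triples to control the number of trials, and Corollary~\ref{cor:AvElkies-ab} to bound the average cost of an Elkies-only point count) is the same as the paper's. But there is a genuine gap at the step you describe as ``invoke the GRH-conditional average form of the Koblitz-type count'': you assert $\#T(x)\gg x^3/(\log x)^2$ with no mechanism for passing from a count of \emph{prime pairs} $(p,N)$ to a count of \emph{triples} $(p,a,b)$. Koblitz's lemma only gives $\Omega(x^{3/2}/(\log x)^2)$ pairs of primes $(p,N)$ with $N$ near $p$; to count curves one must then use the Deuring-type formula that the number of pairs $(a,b)$ with $\#E_{a,b}(\F_p)=N$ equals $\tfrac12(p-1)H(D)$ with $D=(p+1-N)^2-4p$, and then lower-bound the Hurwitz class number. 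This is precisely where the GRH enters the counting, via Littlewood's bound $h(D_0)\gg\sqrt{|D_0|}/\log\log|D_0|$, and it costs a factor of $\log\log x$: the paper only proves $\#S(x)\gg x^3/((\log x)^2\log\log x)$, which is why the theorem's bound carries $(\log\log x)^3$ rather than $(\log\log x)^2$, and why the Comments section singles out an average bound $H(D)\gg\sqrt{|D|}$ as an open improvement. Your stronger claimed density is exactly the statement the authors say they cannot prove; without the class-number argument your trial count, and hence your final bound, is unsupported.

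Two smaller points. First, in your per-trial cost accounting the dominant term is not the exponentiation $X^p\bmod\Phi_\ell(j,X)$ but the computation of the modular polynomial $\Phi_\ell$ itself, done by the isogeny-volcano algorithm in $O(\ell^3(\log\ell)^3\log\log\ell)$ time; this is the second (independent) place the GRH is required, and omitting it leaves the $\widetilde{O}((\log p)^4)$ per-curve bound unjustified as stated. Second, your treatment of the rare ``bad'' curves differs from the paper's: you propose falling back to full SEA or plain Schoof and absorbing the cost via a large constant $C$ in Corollary~\ref{cor:AvElkies-ab}, whereas the paper stays within the Elkies-only algorithm and uses the Galbraith--Satoh GRH result to guarantee $m\ll n^3$ for \emph{every} curve, then averages. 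Your variant is workable (Schoof is unconditionally polynomial, and $C$ can be taken large enough to kill its contribution), and your observation that bad curves must not be discarded, only handled slowly, is exactly the right concern for preserving uniformity; but it analyses a modified algorithm rather than Algorithm~\ref{alg:Prime card} as written.
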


\section{Preparations}
\label{sec:prep}

We recall
the notations $U = O(V)$, $V = \Omega(U)$, $U \ll V$ and  $V \gg U$, which are all
equivalent to the statement that the inequality $|U| \le c\,V$ holds asymptotically,
with some constant $c> 0$. We also write $U = \widetilde{O}(V)$ to
indicate that $|U| \le V (\log V)^{O(1)}$. 
Throughout the paper, any implied constants
in these symbols  may occasionally depend, where obvious, on
the integer parameter $\nu \ge 1$ and the real parameter $\varepsilon>0$,  
and are absolute otherwise.
We always assume  that $\ell$ runs though the prime values. 

Let us first recall some known facts about elliptic curves, which 
are conveniently summarised by Lenstra~\cite{Len}.
In particular, we need the following well-know asymptotic estimate on 
the cardinality of $\# \cE_q$; see~\cite[\S1.4]{Len} for $\gcd(q,6)=1$, 
\cite[Thm.~3.18]{HMV} for $2\mid q$, and~\cite{Jeo} for $3\mid q$.

\begin{lemma} 
\label{lem:Ep}  We have
$$
\# \cE_q = 2q + O(1).
$$
\end{lemma}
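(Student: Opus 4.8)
The plan is to organise the count by the $j$-invariant, which induces a surjection from the set of isomorphism classes represented by $\cE_q$ onto $\F_q$, and to show that all but finitely many fibres of this map have exactly two elements. First I would treat the principal case $\gcd(q,6)=1$, in which every curve admits a short Weierstra\ss\ model $E_{a,b}$ as in~\eqref{eq:Eab}, and two such models are $\F_q$-isomorphic precisely when $(a',b')=(u^4 a,\,u^6 b)$ for some $u\in\F_q^*$. For a fixed admissible $j\in\F_q\setminus\{0,1728\}$, the automorphism group of any representative is $\{\pm1\}$, so the corresponding isomorphism classes are exactly a chosen curve and its quadratic twist; hence each such fibre contributes exactly two classes.

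Next I would isolate the two exceptional values $j=0$ and $j=1728$, where the automorphism group is larger (of order $6$ and $4$ respectively). The number of twists here is governed by $H^1(\mathrm{Gal}(\overline{\F}_q/\F_q),\mathrm{Aut}(E))$, which by Kummer theory over the finite field $\F_q$ equals $\gcd(6,q-1)$ and $\gcd(4,q-1)$; in either case this is bounded by an absolute constant, with the exact value depending only on congruence conditions on $q$. Since there are only these two special $j$-values, their combined contribution is $O(1)$. Summing over fibres then yields
$$
\#\cE_q = 2\,(q-2) + O(1) = 2q + O(1).
$$

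Finally I would dispose of the small-characteristic cases $2\mid q$ and $3\mid q$ separately, where the short Weierstra\ss\ form~\eqref{eq:Eab} is unavailable: one substitutes the appropriate normal forms and repeats the fibre count, the only change being the list of exceptional $j$-invariants and the structure of their twists, which again contributes $O(1)$. The main obstacle is the careful bookkeeping of these exceptional fibres in each characteristic, since the automorphism groups and the supersingular phenomena differ; but because every such contribution is uniformly bounded, the main term $2q$ is unaffected. This is precisely the content consolidated in the references~\cite{Len,HMV,Jeo}.
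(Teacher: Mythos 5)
Your proposal is correct, but note that the paper does not prove this lemma at all: it simply cites \cite[\S1.4]{Len} for $\gcd(q,6)=1$, \cite[Thm.~3.18]{HMV} for $2\mid q$, and \cite{Jeo} for $3\mid q$. What you have written is essentially the standard argument underlying those references: fibre the isomorphism classes over $j\in\F_q$ (every $j$ is attained by a curve over $\F_q$), observe that for $j\neq 0,1728$ the twists are classified by $H^1\bigl(\mathrm{Gal}(\overline{\F}_q/\F_q),\{\pm1\}\bigr)$, which has order $2$, and absorb the finitely many exceptional $j$-values into the $O(1)$. Two small points of care: the two twists over a given $j\neq 0,1728$ must be checked to be non-isomorphic over $\F_q$ (they are, since they represent distinct cohomology classes), and in characteristic $2$ the classification of quadratic twists is by Artin--Schreier rather than Kummer theory, so the phrase ``Kummer theory'' should be reserved for the odd-characteristic case; you have already hedged this by deferring $2\mid q$ and $3\mid q$ to separate normal forms, where the exceptional automorphism groups (of order up to $24$ and $12$) still contribute only $O(1)$. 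So the argument is sound and self-contained, which is more than the paper offers, at the cost of the bookkeeping you acknowledge.
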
 

Furthermore, let $f_q(t)$ be the number of isomorphism classes of curves $E$
over $\F_q$ with $t_E = t$.  Lenstra gives in~\cite[Prop.~1.9]{Len}
the following upper bound on $f_q(t)$, which we formulate together 
with the  Hasse estimate on  possible values of $t$; see~\cite[Prop.~1.5]{Len}
or~\cite{ACDFLNV,Silv}.

\begin{lemma} 
\label{lem:fpt}  We have
$$
f_q(t) \ll   \left\{
\begin{array}{ll}
0, & \quad \text{ if   $|t|>2q^{1/2}$},\\
q^{1/2}\log q (\log \log q)^2, & \quad \text{ if  $|t|\le 2q^{1/2}$}.
\end{array}
\right.
$$
\end{lemma}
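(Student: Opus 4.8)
The plan is to prove the two cases of the claimed bound separately: the vanishing for $|t|>2q^{1/2}$ is the Hasse bound, while the estimate for $|t|\le 2q^{1/2}$ rests on the theory of complex multiplication. For the first case, I would recall that the Frobenius endomorphism $\pi$ of $E$ satisfies $\pi^2-t_E\pi+q=0$ in the endomorphism ring, so its eigenvalues on the Tate module are complex-conjugate algebraic integers $\alpha,\bar\alpha$ with $\alpha\bar\alpha=q$ and $\alpha+\bar\alpha=t_E$. Since $|\alpha|=|\bar\alpha|=q^{1/2}$, we get $|t_E|=|\alpha+\bar\alpha|\le 2q^{1/2}$, so no curve has $|t_E|>2q^{1/2}$ and $f_q(t)=0$ there. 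Equivalently, $n\mapsto\deg(\pi-n)$ is a positive-definite integral binary quadratic form of discriminant $t^2-4q$, which must therefore be negative.

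For $|t|\le 2q^{1/2}$ I would first set aside the supersingular traces, those with $p\mid t$ where $q=p^k$; there are $O(1)$ of them and each is handled by a direct count that stays well within the stated bound, so I may assume $E$ is ordinary. Here Deuring's theorem identifies the $\F_q$-isomorphism classes with trace $t$ with the invertible ideal classes of the orders $\cO$ lying between $\Z[\pi]$ and the maximal order $\cO_K$ of the imaginary quadratic field $K=\Q(\sqrt{t^2-4q})$. Up to the bounded extra automorphisms occurring only at $j\in\{0,1728\}$, this gives $f_q(t)\ll H(t^2-4q)$, the Kronecker class number, namely the sum of the ring class numbers $h(\cO)$ over all such orders. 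Writing $t^2-4q=D_0f^2$ with $D_0$ a fundamental discriminant and $f$ the conductor, one has $H(t^2-4q)=\sum_{c\mid f}h(\cO_c)$, and the classical conductor formula $h(\cO_c)\ll h(D_0)\,c\prod_{p\mid c}(1+1/p)$ reduces everything to bounding $h(D_0)$ together with a divisor sum over $c\mid f$.

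The arithmetic input is the unconditional estimate $h(D_0)\ll|D_0|^{1/2}\log|D_0|$, coming from the Dirichlet class number formula $h(D_0)\asymp|D_0|^{1/2}L(1,\chi_{D_0})$ and the trivial bound $L(1,\chi_{D_0})\ll\log|D_0|$. Since $|D_0|^{1/2}f=|t^2-4q|^{1/2}\le 2q^{1/2}$, the leading term is $h(D_0)f\ll q^{1/2}\log q$, and the multiplicative divisor sum $\sum_{c\mid f}(c/f)\prod_{p\mid c}(1+1/p)\ll\prod_{p\mid f}(1+1/p)\ll\llog f$ supplies one further $\llog q$, so this direct accounting already yields $f_q(t)\ll q^{1/2}\log q\,\llog q$. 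I expect the main obstacle to be not the size of the main term but the careful bookkeeping: carrying the correction factors $\prod_{p\mid c}(1-(\frac{D_0}{p})/p)$ and the unit-index weights $[\cO_K^*:\cO_c^*]$ through the sum over conductors, and treating all prime powers $q$ uniformly. The stated exponent $(\llog q)^2$ is then a safe upper bound that comfortably absorbs these secondary contributions, while the supersingular and $j\in\{0,1728\}$ cases are likewise absorbed without affecting the conclusion.
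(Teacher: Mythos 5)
The paper offers no proof of this lemma: the vanishing for $|t|>2q^{1/2}$ is quoted from Lenstra~\cite[Prop.~1.5]{Len} (the Hasse bound) and the upper bound from Lenstra~\cite[Prop.~1.9]{Len}. What you have written is a correct reconstruction of the argument behind those citations: Hasse via the quadratic $\pi^2-t\pi+q=0$ satisfied by Frobenius, and, for ordinary traces, the Deuring correspondence $f_q(t)\ll H(t^2-4q)$ followed by the conductor decomposition $H(D_0f^2)=\sum_{c\mid f}h(\cO_c)$, the class number formula, and the trivial bound $L(1,\chi_{D_0})\ll\log|D_0|$. The supersingular traces and the $j\in\{0,1728\}$ adjustments are indeed harmless as you say (though for $q=p^{2k}$ and $t=\pm2q^{1/2}$ the supersingular count is of genuine order $q^{1/2}$, so ``well within'' should be read as ``within'').

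One slip in the bookkeeping is worth flagging. Your claim that
$$
\sum_{c\mid f}\frac{c}{f}\prod_{p\mid c}\Bigl(1+\frac1p\Bigr)\ \ll\ \prod_{p\mid f}\Bigl(1+\frac1p\Bigr)\ \ll\ \llog f
$$
is not correct: already for squarefree $f$ the left-hand side equals $\prod_{p\mid f}(1+2/p)$, which for $f$ a product of the first $k$ primes has order $(\llog f)^2$, whereas the right-hand side has order $\llog f$. Carried out correctly, your accounting yields $f_q(t)\ll q^{1/2}\log q\,(\llog q)^2$, not the stronger $q^{1/2}\log q\,\llog q$ you assert. In other words, the exponent $2$ in the lemma is not a safety margin absorbing secondary contributions --- it is the true worst-case order of this very conductor sum, one factor of $\llog$ coming from $\sigma(f)/f$ and one from $\prod_{p\mid f}(1+1/p)$. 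This does not affect the validity of your proof of the lemma as stated, only the stronger intermediate claim.
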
 

We also need some results on multiplicative character sums. 
More precisely, we concentrate on the sums of Jacobi symbols
$(a/b)$; see~\cite[\S~3.5]{IwKow}.  Let us first consider complete sums.

\begin{lemma} 
\label{lem:SumCompl}  For any integer  $a$ and a product 
$m = \ell_1 \ldots \ell_s$ 
of $s$ distinct odd primes $ \ell_1, \ldots,\ell_s$ 
with $\gcd(a,m)=1$ we have
$$
\left|\sum_{t=0}^{m-1} \(\frac{t^2 - a}{m}\) \right| = 1.
$$
\end{lemma}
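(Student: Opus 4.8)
The plan is to evaluate the complete character sum
$$
S(a,m)=\sum_{t=0}^{m-1}\(\frac{t^2-a}{m}\)
$$
by exploiting the multiplicativity of the Jacobi symbol in its denominator. Since $m=\ell_1\cdots\ell_s$ is squarefree, the Jacobi symbol $(n/m)$ factors as $\prod_{i=1}^s(n/\ell_i)$, and by the Chinese Remainder Theorem the residue $t$ modulo $m$ corresponds bijectively to a tuple $(t_1,\ldots,t_s)$ with $t_i=t\bmod\ell_i$. First I would use this to write $S(a,m)$ as a product of local sums,
$$
S(a,m)=\prod_{i=1}^s\(\sum_{t_i=0}^{\ell_i-1}\(\frac{t_i^2-a}{\ell_i}\)\),
$$
reducing the problem to a single odd prime modulus~$\ell=\ell_i$.

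Next I would evaluate the local sum $\sum_{t=0}^{\ell-1}\((t^2-a)/\ell\)$ for an odd prime $\ell$ with $\gcd(a,\ell)=1$. This is the classical count of the Legendre symbol over a quadratic polynomial, and the standard result is that
$$
\sum_{t=0}^{\ell-1}\(\frac{t^2-a}{\ell}\)=-1
$$
whenever the discriminant of $t^2-a$ is nonzero modulo $\ell$, i.e.\ whenever $\ell\nmid a$. The quickest way to see this is to recall the general formula $\sum_{t}\(\frac{\alpha t^2+\beta t+\gamma}{\ell}\)=\(\frac{\alpha}{\ell}\)(\ell-1)$ when $\beta^2-4\alpha\gamma\equiv0$ and $=-\(\frac{\alpha}{\ell}\)$ otherwise; here $\alpha=1$, $\beta=0$, $\gamma=-a$, so the discriminant is $4a$, which is nonzero mod $\ell$ under our hypothesis $\gcd(a,m)=1$, giving the value $-1$.

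Combining the two steps, the product becomes $\prod_{i=1}^s(-1)=(-1)^s$, so $|S(a,m)|=|(-1)^s|=1$, which is exactly the claimed identity. The main obstacle, and the only genuinely substantive point, is establishing the local evaluation $\sum_{t}\((t^2-a)/\ell\)=-1$; everything else is formal bookkeeping with multiplicativity and the CRT. I would either cite the standard reference (the formula appears in the character-sum material of~\cite[\S3.5]{IwKow}) or give the one-line derivation via completing the square: substituting and using that $\((t^2-a)/\ell\)$ counts, up to a shift by $1$, the number of square roots of $t^2-a$, and that as $t$ ranges over $\F_\ell$ each nonzero value $u=t^2$ is hit with the appropriate multiplicity so that the character sum telescopes to~$-1$.
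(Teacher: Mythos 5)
Your proposal is correct and follows essentially the same route as the paper: reduce to a single odd prime via the multiplicativity of the complete Jacobi-symbol sum (equivalently, CRT), then evaluate the local sum $\sum_{t=0}^{\ell-1}\left(\frac{t^2-a}{\ell}\right)=-1$ by the standard formula for Legendre symbol sums of quadratic polynomials (the paper cites Lidl--Niederreiter, Thm.~5.48, for exactly this identity). The only cosmetic difference is that the paper records the local value as $-\left(\frac{1}{\ell}\right)$ rather than $-1$, which changes nothing.
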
 

\begin{proof}
 We use the following special case of the well-known  identity
 for sums of Legendre symbols with quadratic polynomials
see~\cite[Thm.~5.48]{LN}:
$$
\sum_{t=0}^{\ell-1} \(\frac{t^2 - a}{\ell}\)  =  - \(\frac{1}{\ell}\) 
$$
for any prime $\ell \nmid a$. Applying the multiplicativity of 
complete character sums,  see~\cite[Eq.~12.21]{IwKow},
completes the proof. 
\end{proof}

The following estimate is a slight 
generalisation of~\cite[Lem.~2.2]{LuSh}.

\begin{lemma} 
\label{lem:SumIcompl}  For any integers $a$ and $T\ge 1$  and a product 
$m = \ell_1 \ldots \ell_s$ 
of $s\ge 0$ distinct odd primes $ \ell_1, \ldots,\ell_s$ 
with $\gcd(a,m)=1$ we have
$$
\sum_{|t|\le T} \(\frac{t^2 - a}{m}\) \ll T/m + C^s m^{1/2} \log m,
$$
for some absolute constant $C \ge 1$.
\end{lemma}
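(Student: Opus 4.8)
The plan is to reduce the incomplete sum to complete sums modulo $m$ by completion with additive characters, and then to combine Lemma~\ref{lem:SumCompl} with the Weil bound for mixed character sums. Writing $\e_m(z)=\exp(2\pi i z/m)$ for the standard additive character modulo $m$, finite Fourier inversion gives
\begin{equation*}
\sum_{|t|\le T}\(\frac{t^2-a}{m}\)=\frac{1}{m}\sum_{h \bmod m} S(h)\,D_T(h),
\end{equation*}
where $S(h)=\sum_{u \bmod m}\(\frac{u^2-a}{m}\)\e_m(-hu)$ is a complete sum and $D_T(h)=\sum_{|t|\le T}\e_m(ht)$. The factor $D_T$ satisfies $D_T(0)=2T+1$ and, being a Dirichlet kernel, $|D_T(h)|\le 1/(2\|h/m\|)$ for $h\not\equiv 0\pmod m$, where $\|\cdot\|$ denotes the distance to the nearest integer.

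For $h\equiv 0$ the contribution is $(2T+1)S(0)/m$, and Lemma~\ref{lem:SumCompl} gives $|S(0)|=1$, so this term is $O(T/m)$ and yields the first term of the claimed bound. (When $s=0$ we have $m=1$ and the sum is simply $2T+1\ll T$, so we may assume $s\ge 1$.)

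For $h\not\equiv 0$ I would evaluate $S(h)$ by the Chinese Remainder Theorem, factoring it as a product over the primes $\ell_i\mid m$ of sums of the shape $\sum_{u \bmod \ell_i}\(\frac{u^2-a}{\ell_i}\)\e_{\ell_i}(c_i u)$, in which $c_i\equiv 0\pmod{\ell_i}$ exactly when $\ell_i\mid h$. When $\ell_i\mid\gcd(h,m)$ the corresponding factor is a complete quadratic character sum of modulus $1$ by the single-prime case underlying Lemma~\ref{lem:SumCompl}; when $\ell_i\nmid h$ the factor is a mixed sum with a nontrivial additive twist, and the Weil bound gives modulus at most $2\ell_i^{1/2}$. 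Hence, setting $d=\gcd(h,m)$, we obtain $|S(h)|\le 2^s (m/d)^{1/2}$.

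Substituting these estimates, the total contribution of the terms with $h\not\equiv 0$ is bounded by a constant multiple of
\begin{equation*}
\frac{2^s}{m}\sum_{d\mid m}(m/d)^{1/2}\sum_{\substack{1\le h<m\\ \gcd(h,m)=d}}\frac{1}{\|h/m\|}.
\end{equation*}
Writing $h=dh'$ reduces the inner sum to $\sum_{1\le h'<m/d}\|h'/(m/d)\|^{-1}\ll (m/d)\log(m/d)$, so the divisor $d$ contributes $\ll 2^s d^{-3/2}m^{1/2}\log m$; summing against the convergent series $\sum_{d\ge 1}d^{-3/2}=\zeta(3/2)$ produces the second term with $C=2$. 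The main obstacle is the Weil estimate for the mixed sums $\sum_u\(\frac{u^2-a}{\ell}\)\e_\ell(cu)$ (which are Salié-type sums), together with the bookkeeping that isolates the clean factor $C^s$ and guarantees that the sum over divisors $d\mid m$ converges to an absolute constant.
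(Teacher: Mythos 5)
Your proposal is correct and follows essentially the same route as the paper: completion with additive characters, Lemma~\ref{lem:SumCompl} for the zero frequency (giving the $O(T/m)$ term), and the Weil bound for the additively twisted complete sums combined with the estimate $\sum_{h\ne 0}\|h/m\|^{-1}\ll m\log m$ (giving the $C^s m^{1/2}\log m$ term). The only organizational difference is that the paper first splits $[-T,T]$ into $O(T/m)$ full periods plus one partial block and applies the completion only to that partial block, whereas you complete the whole range at once and track $\gcd(h,m)$ --- a refinement that is not actually needed, since the uniform bound $|S(h)|\le 2^s m^{1/2}$ for $h\not\equiv 0 \pmod m$ already yields the same conclusion.
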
 

\begin{proof} The result is trivial when $s= 0$, that is, when $m=1$

For $s \ge 1$,  as in~\cite{LuSh},  we note that the Weil bound 
applied to the mixed sums of additive and 
multiplicative characters with polynomials, of the 
type given in~\cite[Eq.~11.43]{IwKow}, and 
the multiplicativity of complete character sums,
see~\cite[Eq.~12.21]{IwKow},  imply that 
$$
\sum_{t= 1}^{m} \(\frac{t^2 - a}{m}\) \exp\(2\pi i
\frac{\lambda t}{m}\) \ll C^sm^{1/2}  
$$
holds for any integer $\lambda$ and
some absolute constant $C \ge 1$. 
Using the standard reduction between complete and incomplete sums
(see~\cite[\S~12.2]{IwKow}), we derive that for any 
integer $K$ and any positive integer $L\le m$ we have 
\begin{equation}
\label{eq:Incomp}
\sum_{t= K+1}^{K+L} \(\frac{t^2 - a}{m}\) \exp\(2\pi i
\frac{\lambda t}{m}\) \ll C^sm^{1/2} \log m.
\end{equation}

Separating the summation range over $t$ into $O(T/m)$ intervals of length~$m$ 
(and using Lemma~\ref{lem:SumCompl} for the sums over these 
intervals) 
and  at most one interval of length
$m$ (and using~\eqref{eq:Incomp} for the sums over these 
intervals),  we obtain the desired  result.
\end{proof}

Finally, for any integer $n$ we denote by $\omega_L(n)$
the number of primes in the interval $[L,2L]$ that divide $n$.

\begin{lemma} 
\label{lem:PrimeDiv}  For $L \ge 3$ and any integer $\nu\ge 1$, we have
$$
\sum_{|t|< T} \omega_L^{\nu}(t^2 - a) \ll \frac{T}{\log  L} + \frac{L^\nu}{(\log  L)^\nu}.
$$
\end{lemma}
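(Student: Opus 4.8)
The plan is to expand the $\nu$th power of $\omega_L$ into a multiple sum over tuples of primes and then reduce the whole estimate to counting solutions of the congruence $t^2\equiv a$ modulo squarefree integers all of whose prime factors lie in $[L,2L]$.

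First I would write $\omega_L(t^2-a)=\sum_{\ell}[\ell\mid t^2-a]$, where $\ell$ ranges over primes in $[L,2L]$, so that
$$
\omega_L^{\nu}(t^2-a)=\sum_{\ell_1,\ldots,\ell_\nu}\ \prod_{i=1}^{\nu}[\ell_i\mid t^2-a].
$$
Grouping the tuples $(\ell_1,\ldots,\ell_\nu)$ according to the set $\{p_1<\cdots<p_s\}$ of their distinct values (so that $1\le s\le\nu$), and using that for distinct primes $[\,p_1\cdots p_s\mid t^2-a\,]=\prod_{j}[p_j\mid t^2-a]$, this becomes $\sum_{s=1}^{\nu}c_{\nu,s}\sum_{p_1<\cdots<p_s}[\,m\mid t^2-a\,]$ with $m=p_1\cdots p_s$, where $c_{\nu,s}$ is the number of surjections of a $\nu$-element set onto an $s$-element set. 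Since $\nu$ is fixed, $c_{\nu,s}\le s^{\nu}=O(1)$.

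Next I would estimate the inner sum over $t$. For a squarefree modulus $m=p_1\cdots p_s$ the number of residues $t\pmod m$ with $t^2\equiv a\pmod m$ equals $\rho(m)=\prod_{j}\rho(p_j)$, where each local count $\rho(p_j)\in\{0,1,2\}$; hence $\rho(m)\le 2^{s}$, with \emph{no} coprimality hypothesis on $a$ required. Distributing the $\rho(m)$ admissible residue classes over the range $|t|<T$ gives
$$
\sum_{|t|<T}[\,m\mid t^2-a\,]\le\rho(m)\(\frac{2T}{m}+1\)\le 2^{s}\(\frac{2T}{L^{s}}+1\),
$$
using $m\ge L^{s}$.

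Finally I would sum over the $s$-subsets. Since $\pi(2L)-\pi(L)\ll L/\log L$ by the Chebyshev bound, the number of admissible sets $\{p_1,\ldots,p_s\}$ is at most $\binom{\pi(2L)-\pi(L)}{s}\ll (L/\log L)^{s}/s!$\,. Multiplying through, the $2T/m$ term contributes $\ll T/(\log L)^{s}$, whose largest value over $1\le s\le\nu$ is the $s=1$ term $T/\log L$; the remaining $+1$ term contributes $\ll (L/\log L)^{s}$, maximised at $s=\nu$ because $L\ge\log L$ for $L\ge 3$, giving $L^{\nu}/(\log L)^{\nu}$. Summing the finitely many values of $s$ and absorbing all $\nu$-dependent constants yields the asserted bound. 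The only point requiring genuine care is the combinatorial grouping into distinct primes, with its constants $c_{\nu,s}$; everything else is the elementary solution count $\rho(m)\le 2^{s}$ together with the prime-counting estimate for $\pi(2L)-\pi(L)$, so I expect the argument to be bookkeeping rather than to present a real obstacle.
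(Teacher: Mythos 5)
Your proposal is correct and follows essentially the same route as the paper: expand the $\nu$th power into a sum over tuples of primes, group by the number of distinct primes appearing, bound the count of $t$ with $m \mid t^2-a$ by $O(2^{s})(T/m+1)$ via the Chinese remainder theorem (your $\rho(m)\le 2^{s}$ is the same estimate), and then sum using $\pi(2L)-\pi(L)\ll L/\log L$. The only cosmetic difference is that you track the surjection constants $c_{\nu,s}$ explicitly, whereas the paper absorbs them into the implied constant depending on $\nu$.
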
 

\begin{proof}  We have
\begin{equation*}
\begin{split}
\sum_{|t|< T} \omega_L^{\nu}(t^2 - a) =  
\sum_{|t|< T}
\(\sum_{\substack{L \le \ell \le 2L\\ \ell \mid t^2 -a}} 1\)^\nu 
=
\sum_{L \le \ell_1, \ldots \ell_\nu \le 2 L}
\sum_{\substack{|t| < T\\ 
\lcm[\ell_1, \ldots, \ell_\nu] \mid t^2 -a}} 1.
\end{split}
\end{equation*}
By the Chinese remainder theorem, for any squarefree $m\ge 1$ we have 
$$
\sum_{\substack{|t| < T\\ 
m \mid t^2 -a}} 1 \ll 2^j(T/m+1),
$$
where $j = \omega(m)$ counts the prime divisors of $m$. 
Now, for each $j=1, \ldots, \nu$ we collect together 
the terms such that among $\ell_1, \ldots \ell_\nu < L$,
only are $j$ distinct. 
We then obtain
\begin{equation*}
\begin{split}
\sum_{|t|< T} \omega_L^{\nu}(t^2 - a) & \ll    
\sum_{j=1}^\nu  \sum_{L \le \ell_1, \ldots, \ell_j\le  2 L} 
\(\frac{T}{\ell_1  \ldots \ell_j} + 1\)\\
& \le 
\sum_{j=1}^\nu \(T \(\sum_{L \le \ell \le  2L} 
 \frac{1}{L}\)^j + \pi(2L)^j\).
\end{split}
\end{equation*}
Applying the Prime Number Theorem completes the proof.
\end{proof}

\section{Proof of Theorem~\ref{thm:AvElkies}}

Clearly, we have 
$$
N_a(E;L) - N_e(E;L)  = \sum_{L \le \ell \le 2L} \(\frac{t_E^2 - 4q}{\ell}\)
+ O\(\omega_L(t_E^2-4q) +1\),
$$
where, as before, $\omega_L(n)$   denotes the number of primes  
$\ell\in[L,2L]$ with $\ell \mid n$.

Therefore,  
\begin{equation}
\label{eq:UV}
 \frac{1}{\# \cE_q} \sum_{E \in \cE_q} \left|N_*(E;L) - \frac12 \(\pi(2L)-\pi(L)\)\right|^{2 \nu}
\ll  \frac{1}{\# \cE_q} U_\nu +    \frac{1}{\# \cE_q} V_\nu + 1, 
\end{equation}
where as before $N_*(E;L)$ is either $N_a(E;L)$ or $N_e(E;L)$ and 
$$
U_\nu = 
\sum_{E \in \cE_q} \left| \sum_{L \le \ell \le 2L} \(\frac{t_E^2 - 4q}{\ell}\)\right|^{2 \nu}
\mand 
V_\nu = 
\sum_{E \in \cE_q}  \omega_L(4q-t_E^2)^{2\nu}.
$$

By Lemma~\ref{lem:fpt}
\begin{equation}
\label{eq:Wfpt}
\begin{split}
 U_\nu & =   \sum_{|t| < 2q^{1/2}} f_q(t)  
\left| \sum_{L \le \ell \le 2L} \(\frac{t^2 - 4q}{\ell}\)\right|^{2\nu}\\
& \ll q^{1/2}\log q (\log \log q)^2
 \sum_{|t| < 2q^{1/2}} 
\left| \sum_{L \le \ell \le 2L} \(\frac{t^2 - 4q}{\ell}\)\right|^{2\nu}, 
\end{split}
\end{equation}
where $f_q(t)$ is defined as in Section~\ref{sec:prep}. Furthermore, 
$$
\sum_{|t| < 2q^{1/2}} 
\left| \sum_{L \le \ell \le 2L} \(\frac{t^2 - 4q}{\ell}\)\right|^{2\nu}  
= \sum_{3 \le \ell_1,\ldots,\ell_{2\nu} \le L}
 \sum_{|t| < 2q^{1/2}}  \(\frac{t^2 - 4q}{\ell_1\ldots\ell_{2\nu}}\).
$$

For every $j=0, \ldots \nu$ let $\cQ_j$ be the set 
of $2\nu$ tuples  $(\ell_1,\ldots,\ell_{2\nu})$ of primes with 
$L \le \ell_1,\ldots,\ell_{2\nu} \le 2L$ such that the product
$r=\ell_1 \ldots \ell_{2\nu}$ is of the form $r = k^2 m$,
with $m$ squarefree and $k$ the product of $j$ 
primes.

For the cardinalities of these sets we clearly have
$$
\# \cQ_j \ll \(\pi(2L)-\pi(L)\)^{2\nu - j} \ll \frac{L^{2\nu -j}}{(\log L)^{2\nu -j}}. 
$$

Using Lemma~\ref{lem:SumIcompl} for   $(\ell_1,\ldots,\ell_{2\nu})\in  \cQ_j$,
$j=0, \ldots, \nu$, 
we obtain
\begin{equation*}
\begin{split}
\sum_{|t| < 2q^{1/2}} 
\left| \sum_{L \le \ell \le 2L} \(\frac{t^2 - 4q}{\ell}\)\right|^{2\nu} 
   & \ll    
\sum_{j=0}^{\nu} \# \cQ_j ( q^{1/2}/L^{2\nu - 2j} + L^{\nu - j}\log L)\\
  & \ll      
\sum_{j=0}^{\nu}   \( q^{1/2} \frac{L^{j}}{(\log L)^{2\nu -j}} +  
\frac{L^{3\nu -2j}}{(\log L)^{2\nu -j-1}}\)\\
   &  \ll
  q^{1/2} \frac{L^{\nu}}{(\log L)^{\nu}} +  
\frac{L^{3\nu}}{(\log L)^{2\nu -1}}.
\end{split}
\end{equation*}

Inserting this bound in~\eqref{eq:Wfpt}, we obtain
\begin{equation}
\label{eq:U bound}
 U_\nu\ll q^{1/2}\log q (\log \log q)^2\( q^{1/2} \frac{L^{\nu}}{(\log L)^{\nu}} +  
\frac{L^{3\nu}}{(\log L)^{2\nu -1}}\).
\end{equation}

Finally, by Lemma~\ref{lem:PrimeDiv} we have
\begin{equation}
\label{eq:V bound}
V_\nu\ll q^{1/2}\log q (\log \log q)^2\( \frac{q^{1/2}}{\log  L} + 
\frac{L^{2\nu}}{(\log  L)^{2\nu}}\).
\end{equation}

Substituting~\eqref{eq:U bound} and~\eqref{eq:V bound} in~\eqref{eq:UV} and recalling
Lemma~\ref{lem:Ep}, we conclude the proof.

\section{Point Counting on Random Elliptic Curves}

We now consider the problem of generating a random elliptic curve whose group of $\F_p$-rational points has prime order.
 One approach is to fix the prime~$p$, and then count points on randomly generated elliptic curves over $\F_p$ until a curve with prime order is found.  Using the SEA point-counting algorithm, this procedure heuristically has an expected running time of $\widetilde{O}(n^5)$, where $n=\log p$.  However, for a fixed prime~$p$, we cannot hope to prove even a polynomial time bound, because
even under the GRH 
the Hasse interval $[p-2\sqrt{p},p+2\sqrt{p}]$ is too narrow to permit a useful lower bound on the number of primes it contains.  Thus we let $p$ vary over an interval $[x,2x]$, which at least makes a polynomial-time bound feasible; see~\cite{Kob}.

A second obstacle to obtaining an $\widetilde{O}(n^5)$ expected time bound is that the expected running time of the SEA algorithm is not known to be polynomial in $n$, unless we assume the GRH.  Even with the GRH, the expected running time of the SEA algorithm on any particular curve is only bounded by $\widetilde{O}(n^5)$, yielding an $\widetilde{O}(n^6)$ bound overall.  However, for randomly generated curves, Theorem~\ref{thm:AvElkies} yields a tighter bound, on average, allowing us to prove an $\widetilde{O}(n^5)$ bound on the expected time to find a curve of prime order, under the GRH.

We first present an algorithm that attempts to count the points on the elliptic curve $E_{a,b}$ modulo~$p$, using a simplified version of the SEA algorithm that relies only on Elkies primes.  In the course of doing so, the algorithm may discover that $p$ is composite
(using the Miller-Rabin algorithm~\cite{Rab}), 
or that the curve $E_{a,b}$ is singular modulo $p$, and in either case it outputs $0$; otherwise, it returns a positive integer $N$ in the Hasse interval  $[p-2\sqrt{p},p+2\sqrt{p}]$.  If $p$ is in fact prime (and $E_{a,b}$ is not singular), then $N$ is equal to $\#E_{a,b}(\F_p)$.

\begin{algol}
\label{alg:SEA count} Point-counting modulo $p$ using Elkies primes.
\begin{description}
\item[Input:] An integer $p>3$ and integers $a,b\in[0,p-1]$.
\item[Output:] A positive integer $N\in[p-2\sqrt{p},p+2\sqrt{p}]$
with $\#E_{a,b}(\F_p)= N$ if~$p$ is prime and $4a^3+27b^2\not\equiv 0\bmod p$, and $0$ otherwise.
\end{description}
\end{algol}

\begin{enumerate}
\item In parallel to the steps below, repeatedly test $p$ for compositeness using the Miller-Rabin algorithm~\cite{Rab}.
If at any point $p$ is found to be composite then output $0$ and terminate.
\item If $\gcd(4a^3+27b^2,p)\ne 1$ then output $0$ and terminate.  Otherwise, 
set $j  \leftarrow 1728\frac{4a^3}{4a^3+27b^2}\bmod p$.
\item Test whether $E=E_{a,b}\bmod p$ is supersingular using~\cite[Alg.~2]{Sut1}.\\
If so, then output $p+1$ and terminate.
\item Set $i  \leftarrow  0$, $M  \leftarrow 1$, and for primes $\ell = 2,3,5,\ldots$, do the following:
\begin{enumerate}
\item Compute the modular polynomial $\Phi_\ell(X,Y)$ using~\cite[Alg.~1]{BLS}.
\item Compute $\phi(X)=\Phi_\ell(j,X)$ and $f(X) =\gcd (X^p-X,\phi(x))$ in the ring $(\Z/p\Z)[X]$.  If $\deg f = 0$ then proceed to the next prime $\ell$.
\item Find a root $\jt$ of $f(X)$ modulo $p$.
\item Compute the Elkies polynomial $h(X)$ whose roots are the abscissae of the points in the kernel of the $\ell$-isogeny $\phi$ from $E$ to a curve $\widetilde{E}$ with $j$-invariant $\jt$.\footnote{The special case $(\partial\Phi_\ell/\partial X)(j,\jt)=(\partial\Phi_\ell/\partial Y) (j,\jt) = 0$ must be handled separately, see the proof of Lemma~\ref{lem:SEA count} for details.}
\item Using $h$, determine the integer $\lambda\in[1,\ell-1]$ for which the $p$-power Frobenius action on $\ker\phi$ is equivalent to multiplication by $\lambda$.
If no such $\lambda$ exists then output $0$ and terminate.
\item Set $i \leftarrow i+1$,  $\ell_i\leftarrow \ell$, 
$M \leftarrow M \ell$, and $t_i \leftarrow \lambda + p/\lambda \bmod \ell$.
\item If $M > 4\sqrt{p}$ then proceed to Step~5.  Otherwise, continue Step~4.
\end{enumerate}
\item
Compute the unique integer $t\in[-M,M]$ for which $t\equiv t_i\bmod \ell_i$ for each Elkies prime $\ell_i$.  If $|t|>2\sqrt{p}$ then output 0, otherwise, output $N=p+1-t$.
\end{enumerate}

We note that the algorithm is not in any sense required to be ``correct" when $p$ is composite, it may output either 0 or any integer $N$ in the Hasse interval in this case.  The Miller-Rabin tests begun in Step~1 of Algorithm~\ref{alg:SEA count} are there simply to ensure that composite $p$ are handled efficiently.  This is necessary since the rest of the algorithm, operating on the assumption that $p$ is prime, may run extremely slowly or even fail to terminate if~$p$ is composite.

Assuming that $p$ is prime, the value $j$ computed in Step~2 is the $j$-invariant of the elliptic curve 
$E=E_{a,b}$ over $\F_p$.
The classical modular polynomial $\Phi_\ell$ parametrises pairs of $\ell$-isogenous elliptic curves; the roots of $\Phi_\ell(j(E),X)$ are the $j$-invariants of the curves~$\widetilde{E}$ that are related to $E$ by a cyclic isogeny of degree $\ell$.  There exists such 
an elliptic curve $\widetilde{E}$ 
defined over $\F_p$ precisely when $\ell$ is an Elkies prime for $E$, thus Elkies and Atkin primes are distinguished in Steps~4b and~4c, which attempt to find a root of $\Phi_\ell(j(E),X)$  in $\F_p$.  Steps~4c-4f then apply the standard SEA procedure for computing the trace of Frobenius modulo an Elkies prime $\ell$, as described by Schoof in~\cite{Sch}.

We now consider the complexity of Algorithm~\ref{alg:SEA count}.  We use the asymptotic bound $O(n\log n\log \log n)$ of Sch\"onhage and Strassen~\cite{SS} to bound the time $\M(n)$ to multiply to $n$-bit integers (see also~\cite{GG}), and note that all of our complexity estimates count bit operations.

\begin{lemma}\label{lem:SEA count}
Let $n=\rf{\log p}$ and assume the GRH.  For composite $p$, the expected running time of Algorithm~\ref{alg:SEA count} is $O(n^2\log n\log\log n)$.
For prime $p$, the average expected running time of Algorithm~\ref{alg:SEA count} over 
integers $a,b\in[0,p-1]$ is $O(n^4(\log n)^2\log \log n)$.
\end{lemma}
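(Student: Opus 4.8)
The plan is to bound the running time of Algorithm~\ref{alg:SEA count} by analysing its two regimes separately, corresponding to the two cases in the statement. For composite $p$, the key observation is that Step~1 runs Miller-Rabin tests in parallel with the rest of the algorithm. Under the GRH, a composite $p$ fails a Miller-Rabin test for at least three quarters of the possible witnesses, so the expected number of independent trials before detecting compositeness is $O(1)$, and a single Miller-Rabin test costs $O(n^2\log n\log\log n)$ bit operations using fast arithmetic $\M(n)=O(n\log n\log\log n)$ for the required modular exponentiation $a^{p-1}\bmod p$ (which needs $O(n)$ multiplications of $n$-bit integers). Since the parallel composite-detection branch terminates after an expected $O(1)$ tests, the total expected cost in the composite case is $O(n^2\log n\log\log n)$, matching the claim.

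For prime $p$, I would first bound the cost of processing a single Elkies prime~$\ell$, and then multiply by the expected number of primes the algorithm must examine before $M$ exceeds $4\sqrt p$. Each iteration of Step~4 involves: computing $\Phi_\ell$ via~\cite[Alg.~1]{BLS}; forming $\phi(X)=\Phi_\ell(j,X)$ and the gcd $f(X)=\gcd(X^p-X,\phi(X))$ in $(\Z/p\Z)[X]$, where the expensive part is the modular exponentiation $X^p\bmod\phi(X)$; root-finding; constructing the Elkies polynomial $h(X)$; and the eigenvalue search for $\lambda$. Standard SEA complexity analysis (as in Schoof~\cite{Sch}) shows that for a single prime $\ell=O(n)$ the dominant step costs $\widetilde O(\ell^2 n)=\widetilde O(n^3)$ bit operations. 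The footnote flags that the degenerate case $(\partial\Phi_\ell/\partial X)(j,\jt)=(\partial\Phi_\ell/\partial Y)(j,\jt)=0$ requires separate treatment; I would dispose of it by noting it occurs for only $O(1)$ primes per curve and handling those primes by Schoof's original method without affecting the asymptotics.

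The heart of the prime-$p$ argument, and the step I expect to be the main obstacle, is controlling how many primes $\ell$ must be processed. Under the GRH, to accumulate $M>4\sqrt p$ one needs Elkies primes whose product exceeds $4\sqrt p$, so the largest prime used is of size $\ellmax=O(n)$; but the algorithm examines \emph{all} primes (Atkin and Elkies) up to $\ellmax$, spending nontrivial time even on Atkin primes before Step~4b detects $\deg f=0$. Crucially, the claim is for the \emph{average} expected running time over $a,b\in[0,p-1]$, which is where Theorem~\ref{thm:AvElkies} (via Corollary~\ref{cor:AvElkies-ab}) enters: for all but a negligible fraction of curves, Elkies primes are plentiful in every dyadic range, so the product $M$ grows quickly and $\ellmax$ stays as small as $O(n)$ on average rather than being forced large by a pathological scarcity of Elkies primes. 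I would therefore split the average over $(a,b)$ into the good curves, where the number of processed primes is $O(n/\log n)$ each of cost $\widetilde O(n^3)$, giving $O(n^4(\log n)^2\log\log n)$ after a careful accounting of the $\log$ factors from $\M$ and from summing $\sum_{\ell\le\ellmax}\widetilde O(\ell^2 n)$; and the exceptional curves from Corollary~\ref{cor:AvElkies-ab}, whose density $p^2(\log p)^{-C}$ is small enough (taking $C$ large) that even the worst-case per-curve SEA bound $\widetilde O(n^6)$ contributes negligibly to the average. Balancing these two contributions and verifying that the polylogarithmic factors collapse to exactly $(\log n)^2\log\log n$ is the delicate bookkeeping that completes the proof.
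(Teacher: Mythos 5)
Your overall architecture matches the paper's proof: $O(1)$ Miller--Rabin tests at $O(n\M(n))$ each for composite $p$; for prime $p$, a per-prime cost of $\widetilde{O}(n^3)$ over $O(m/\log m)$ primes up to the largest prime $m$ used in Step~4, with Corollary~\ref{cor:AvElkies-ab} guaranteeing $m\ll n$ outside a set of pairs $(a,b)$ of density $(\log p)^{-C}$, and the exceptional pairs absorbed by their low density. (One cosmetic difference: the paper identifies the dominant per-prime cost as the $O(\ell^3(\log\ell)^3\log\log\ell)$ computation of $\Phi_\ell$ in Step~4a, not the $\widetilde{O}(\ell^2 n)$ Frobenius/eigenvalue work; these coincide at $\widetilde{O}(n^3)$ only once $\ell\ll n$ is known.)

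The one genuine gap is your treatment of the exceptional curves. You bound their contribution by ``the worst-case per-curve SEA bound $\widetilde{O}(n^6)$,'' but that figure refers to the full SEA algorithm (which also exploits Atkin primes and therefore always terminates with $m\ll n$); Algorithm~\ref{alg:SEA count} uses \emph{only} Elkies primes, so for a curve with few Elkies primes there is no a priori polynomial bound at all --- the loop in Step~4 simply keeps searching. You must supply a polynomial bound on $m$ for the exceptional curves from somewhere, and this is exactly where the paper invokes the Galbraith--Satoh GRH result: for \emph{every} curve the Elkies primes equidistribute once $L\ge(\log p)^{2+\varepsilon}$, so $m\ll n^3$ always, giving a (large but fixed) polynomial worst case that is then killed by taking $C$ large in Corollary~\ref{cor:AvElkies-ab}. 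Without citing that result your averaging argument does not close. A secondary, smaller issue: your claim that the degenerate case $(\partial\Phi_\ell/\partial X)(j,\jt)=(\partial\Phi_\ell/\partial Y)(j,\jt)=0$ ``occurs for only $O(1)$ primes per curve'' is asserted, not proved (a priori $\ell^2\mid t^2-4q$ can hold for up to $O(n)$ primes $\ell$, and running Schoof's division-polynomial method at $\widetilde{O}(n^4)$ for each would overshoot the target bound); the paper sidesteps the counting question entirely by using Mestre's theorem to determine $N$ outright, once, in $O(\ell^2 n\M(n))$ expected time when this case arises.
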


\begin{proof}
We expect to detect a composite $p$ using $O(1)$ Miller-Rabin tests, each of which has complexity $O(n\M(n))=O(n^2\log n\log\log n)$, the time to perform an exponentiation modulo $p$.  This proves the first claim.

We now assume $p$ is prime.  The complexity of Step~2 is $O(\M(n)\log n)$, and Step~3 runs in  $O(n^3\log n\log\log n)$ expected time; see~\cite[Prop.~4]{Sut1}.

Let $m$ be the largest prime $\ell$ used in Step~4.  We have 
$\log M \ge n/2$ thus by 
the Prime Number Theorem, $m \gg n$.  Ignoring constant factors, we may use $m$ as an upper bound on both $\ell$ and $n$.  Table~\ref{tab:step4} estimates the costs of Steps~4a-4f in terms of $\ell$ and $n$, and also gives bounds in terms of $m$.  We use standard asymptotic bounds on the complexity of (fast) arithmetic operations in $\Z/p\Z$ and $\Z/p\Z[X]$, all of which can be found in~\cite{GG}.\footnote{Some of these bounds can be improved by using Kronecker substitution to multiply polynomials in $\Z/p\Z[X]$, but this does not change the overall complexity.}

In Step~4a we use the isogeny volcano algorithm of~\cite{BLS} to compute the modular polynomial $\Phi_\ell$, and it is here that we need to assume the GRH.  
In the complexity bound for Step~4d we include the cost of computing and evaluating various partial derivatives of $\Phi_\ell$ modulo $p$, and use Elkies' algorithm to compute the kernel polynomial $h(X)$; 
see~\cite{Elk} and~\cite[Ch.~25]{Gal} for details, and~\cite{BMSS} for further optimizations.  
In  the complexity bound for Step~4e,
the first term bounds the time to compute the action of Frobenius on $\ker \phi$ (this involves computing $X^p$ and $Y^p$ modulo $h$ and $E_{a,b}$), while the second term bounds the time to compute the action of multiplication by $\lambda$ on $\ker \phi$ for every integer $\lambda$ in $[1,\ell-1]$; see~\cite{GM} for details and optimizations. 

\begin{table}
\begin{center}
\caption{Complexity bounds for Step~4 of Algorithm~\ref{alg:SEA count}}\label{tab:step4}
\medskip

\begin{tabular}{clll}
step & result  & expected time $O(\cdots)$ & in terms of $m$\\\hline
(a) & $\Phi_\ell(X,Y)$ & $\ell^3(\log \ell)^3\log\log \ell$ & $m^3(\log m)^3\llog m$\\
(b) & $\phi(X)$ & $\ell^2\M(\ell\log\ell+n)$ & $m^3(\log m)^2\llog m$\\
     & $X^p\bmod \phi$ & $n\M(\ell)\M(n)$ & $m^3(\log m)^2(\llog m)^2$\\
     & $f(X)$ & $\M(\ell)\M(n)\log\ell+\ell\M(n)\log n$ & $m^2(\log m)^3(\llog m)^2$\\
(c) & $\jt$ & $\M(\ell)\M(n)n$ & $m^3(\log m)^2(\llog m)^2$\\
(d) & $h(X)$ & $\ell^2\M(n)+\M(n)\ell\log n$ & $m^3\log m\llog m$\\
(e) & $\lambda$ & $\M(\ell)\M(n)n+\M(\ell)\M(n)\ell$ & $m^3(\log m)^2(\llog m)^2$\\
(f)  & $t_i$ & $\ell\M(\log \ell)\llog\ell$ & $m\log m(\llog m)^2$\\
     & $M$ & $\M(n+\log \ell)$ & $m\log m\llog m$\\
\end{tabular}
\end{center}
\end{table}

The cost of Steps~4a--4f is dominated by the $O(m^3(\log m)^3\log\log m)$ cost of Step~4a, which also dominates the cost of Steps~2, 3, and~5, the last of which has complexity $O(\M(m)\log m)$.
The number of iterations in Step~4 is at most $\pi(m)=O(m/\log m)$, thus when $p$ is prime, the total 
expected running time of Algorithm~\ref{alg:SEA count} is $O(m^4(\log m)^2\log\log m)$.

To address the special case $(\partial\Phi_\ell/\partial X)(j,\jt)=(\partial\Phi_\ell/\partial Y) (j,\jt) = 0$, we note that, as explained by Schoof in~\cite[pp.~248--249]{Sch}, there are then only $O(\ell^2)$ possible values for $N$.  For $p>229$, only one of these candidates satisfies Mestre's theorem~\cite[Thm.~3.2]{Sch}.  By multiplying random points on $E_{a,b}(\F_p)$ and its quadratic twist by each of the candidate values for $N$, we can uniquely determine $N$ in $O(\ell^2n\M(n))=O(m^4\log m\log\log m)$ expected time, which is dominated by the bound we derived above (and for $p\le 229$ we can simply enumerate the elements of $E_{a,b}(\F_p)$ by brute force).

We now notice that by Corollary~\eqref{cor:AvElkies-ab} and the Prime Number Theorem we have $m \ll n$
for all but $O(p^2 n^{-3})$ pairs $(a,b)\in \F_p^2$ for which, by
the result of Galbraith and Satoh~\cite[App.~A]{Sat}, we have $m \ll n^3$. 

Thus if we average over all integers of $a,b$ in $[0,p-1]$ for a fixed prime~$p$, then the expected 
value of $m$ is $O(n)$, which completes the proof. \end{proof}

\section{Proof of Theorem~\ref{thm:PrimeCard}}
\label{sec:alg}

The proof is based on the analysis of the following procedure:

\begin{algol}
\label{alg:Prime card} Generation of a random elliptic curve with a prime number of rational points over a finite field.
\begin{description}
\item[Input:] A real $x>3$.
\item[Output:] A prime $p\in[x,2x]$, $a,b\in\F_p$, and  $N=\#E_{a,b}(\F_p)$ prime.
\end{description}
\end{algol}

\begin{enumerate}
\item Pick a uniformly random integer $p$ in the interval $[x,2x]$.
\item Pick uniformly random integers $a,b\in[0,p-1]$ and apply Algorithm~\ref{alg:SEA count}
 to $E_{a,b}\bmod p$, obtaining $N$.
If Algorithm~\ref{alg:SEA count} finds that $p$ is composite or that $p|(4a^3+27b^2)$ then return to Step~1.
\item Apply $\lceil\log x\rceil$ Miller-Rabin tests to both $p$ and $N$.
If either $p$ or $N$ is found to be composite then return to Step~1.
\item Determine the primality of $p$ and $N$ using a randomized AKS algorithm~\cite{Ber}.
If $N$ and $p$ are both prime, then output $p$, $a$, $b$, and $N$, and terminate.  Otherwise, return to Step~1.
\end{enumerate}

The Miller-Rabin algorithm~\cite{Rab} attempts to prove that a given integer~$p$ is not prime 
(that is, composite) 
via a sequence of independent random tests, each of which detects 
a composite $p$ with probability at least 3/4.  Thus the probability that the algorithm reaches Step~4 when $N$ is composite is less than $1/\log x$.
The primality testing algorithm used in Step~4 is a randomized version of the Agarwal-Kayal-Saks algorithm~\cite{AKS} due to Bernstein~\cite{Ber}, and 
determines whether $N$ is prime or composite in $O(n^{4+\varepsilon})$ expected time, for any $\varepsilon > 0$.

We now put $n=\rf{\log x}$ 
and show that the expected running time of Algorithm~\ref{alg:Prime card} is $O(n^5(\log n)^3\log\log n)$.

Step~2 of Algorithm~\ref{alg:Prime card} calls Algorithm~\ref{alg:SEA count} with parameters chosen uniformly at random from the set $T(x)$ of triples $(p,a,b)$ with $x\le p\le 2x$ and $0\le a,b < p$, which has cardinality $O(x^3)$.
Let $S(x)$ denote the subset of $T(x)$ consisting of those triples $(p,a,b)$ for which both $p$ and $N=\#E_{a,b}(\F_p)$ are prime (and $p\nmid (4a^3+27b^2)$, which we assume throughout).

We first show that the cardinality of $S(x)$ satisfies
\begin{equation}
\label{eq:Card S}
\#S(x) \gg \frac{x^3}{(\log x)^2 \log\log x}.
\end{equation}
By~\cite[Lem.~1]{Kob}, the number of pairs of primes $(p,N)$ with $x\le p\le 2x$ and $p-\sqrt{p}\le N\le p+\sqrt{p}$ is $\Omega(x^{3/2}/(\log x)^2)$.
For each such pair $(p,N)$, the number of pairs $(a,b)$ with $0\le a,b < p$ for which $\#E_{a,b}(\F_p)=N$ is $\frac{1}{2}(p-1)H(D)$, where $D=(p+1-N)^2-4p$, and $H(D)$ denotes the Hurwitz class number, see~\cite[Thm.~14.18]{Cox}. 
 Let $D=vD_0$, where $D_0$ is a fundamental discriminant.
By~\cite[Lem.~9]{Sut2}, we have $H(D)\ge vH(D_0) \ge \frac{1}{3}v h(D_0)$, and the GRH implies 
$$
h(D_0)\gg \sqrt{|D_0|}/\log\log|D_0|,
$$ 
where $h(D_0)$ is the usual class number, by a theorem of Littlewood~\cite{Lit}.
It follows that 
$$
H(D) \gg \sqrt{|D|}/\log\log|D| \gg \sqrt{x}/\log\log x,
$$  
see also comments in~\cite[Section~1.6]{Len}. 
Therefore,  
there are $\Omega(x^{3/2}/\log\log x)$ pairs $(a,b)$ with $\#E_{a,b}(\F_p)=N$ and $\Omega(x^{3/2}/(\log x)^2)$ 
pairs of primes $(p,N)$, which implies~\eqref{eq:Card S}. 

Thus we expect to generate $O((\log x)^2\log\log x)=O(n^2\log n)$ random triples $(p,a,b)$ in order to obtain a triple for which $p$ and $N=\#E_{a,b}(\F_p)$ are both prime.
Once this occurs, the algorithm 
successfully completes Steps~2-5 and terminates.
We now consider the cost of processing each random triple, which we divide into 3 cases.
\begin{enumerate}
\item If $p$ is composite, the expected cost of Step~2 is $O(n^2\log n\log\log n)$, by Lemma~\ref{lem:SEA count}, which also bounds the complexity of Step~3 (assuming it is reached), since we actually expect to discover that $p$ is composite using just $O(1)$ Miller-Rabin tests.
The probability of reaching Step~4 is less than $4^{-\log x}=O(1/x)$, by~\cite{Rab}, which makes the conditional cost of Steps~4 and~5 in this case completely negligible, since they both have expected running times that are polynomial in $\log x$.
\item If $p$ is prime and $N$ is composite, the expected cost of Step~2 given by Lemma~\ref{lem:SEA count} is $O(n^4(\log n)^2\log\log n)$, which dominates the complexity of Step~3 and the conditional cost of Step~4 (which, as in Case~1, we have negligible probability of reaching).
\item If $p$ and $N$ are prime, the expected costs of Steps~2, 3, 4, and~5 are, respectively,  $O(n^4(\log n)^2\log\log n)$, $O(n^3\log n\log\log n)$, $O(n^{4+\varepsilon})$, and $O(n^2\log n\log\log n)$; see~\cite{Ber} for the bound on Step~4.
Thus the total expected cost is $O(n^{4+\varepsilon})$ for any $\varepsilon>0$. 
\end{enumerate}

We now bound the expected running time of Algorithm~\ref{alg:Prime card} by considering how often we expect each case to occur.
We expect to be in Case~1 for $O(n^2\log n)$ triples, each of which takes $O(n^2\log n\log\log n)$ time, yielding a total bound of $O(n^{4+\varepsilon})$.
We expect to be in Case~2 for $O(n\log n)$ triples, each of which takes $O(n^4(\log n)^2\log\log n)$ expected time, yielding a total bound of $O(n^5(\log n)^3\log\log n)$.
Case~3 occurs exactly once, and takes $O(n^{4+\varepsilon})$ expected time.
Case~2 dominates and the theorem follows.

\section{Comments}

The bound in Theorem~\ref{thm:PrimeCard} would be improved by a factor of $\log n$ if one could show 
that $H(D)=\Omega(\sqrt{|D|})$, on average, but we have not attempted to do this (note that the distribution of $D$ is not uniform).
As a practical optimization, one can add an ``early abort" option in Algorithm~\ref{alg:SEA count} that causes the algorithm to terminate if it discovers that $N\equiv 0\bmod \ell$.
Heuristically, this should reduce the running time of Algorithm~\ref{alg:Prime card} by a factor of $\log n$.
Another practical optimization is to reuse the modular polynomials $\Phi_\ell$ that are computed in Algorithm~\ref{alg:SEA count}, which do not depend on the inputs $p$, $a$, and~$b$.
This saves a factor of $\log n$ in the expected running time, but increases the expected space complexity from $O(n^3\log n)$ to $O(n^4\log n)$.
Combining these two optimizations with the assumption that $H(D) \gg \sqrt{|D|}$ on average, yields a heuristic expected running time of $O(n^5\log\log n)$ for Algorithm~\ref{alg:Prime card}.

\section*{Acknowledgement}

During the preparation  I.~E.~Shparlinski was supported in part
by ARC grant DP1092835, and by NRF Grant~CRP2-2007-03, Singapore.
A.~V.~Sutherland received financial support from NSF grant DMS-1115455.

\end{document}